\theoremstyle{plain}%
 \newtheorem{theorem}{Theorem}
 \newtheorem{corollary}{Corollary}
\theoremstyle{remark}
\theoremstyle{definition}
\newtheorem{example}{Example}
\begin{document}

\begin{center}
{\large Schur-hooks and Bernoulli number recurrences}
\end{center}

\begin{center}
{\textsc{John M. Campbell}} 

 \ 

\end{center}

\begin{abstract}
  Given an identity relating families of Schur and power sum symmetric functions, this may be thought of as encoding 
 representation-theoretic properties   according to how the $p$-to-$s$ transition matrices provide the irreducible character tables for 
  symmetric groups. The case of the Murnaghan--Nakayama  rule for cycles provides that $p_{n} = \sum_{i = 0}^{n-1} (-1)^i s_{(n-i, 
 1^{i})}$, and, since the power sum generator $p_{n}$ reduces to $\zeta(2n)$   for the Riemann zeta function $\zeta$ and for specialized  
  values of the indeterminates involved in the inverse limit construction of the algebra of symmetric  functions, this motivates both  
  combinatorial and number-theoretic applications related to the given case of the Murnaghan--Nakayama rule. In this  direction, since  
  every Schur-hook admits an expansion in terms of twofold products of elementary and complete homogeneous generators, we exploit  
 this   property for the same specialization that allows us to express $p_{n}$ with the Bernoulli number $B_{2n}$, using remarkable results  
  due to Hoffman on   multiple harmonic series. This motivates our bijective approach, through the use of sign-reversing involutions,  
 toward the determination of identities that   relate Schur-hooks and power sum symmetric functions and that we apply to obtain a new  
 recurrence for Bernoulli numbers.  
\end{abstract}

\noindent {\footnotesize \emph{MSC:} 05E05, 11B68}

\vspace{0.1in}

\noindent {\footnotesize \emph{Keywords:} Schur function, integer partition, Bernoulli number, symmetric function, Riemann zeta 
 function,  Murnaghan--Nakayama rule, sign-reversing involution, multiple harmonic series} 

\section{Introduction}
  One of the most important identities in both algebraic combinatorics and representation theory is given by how the transition matrices  
  for expanding the   power sum bases of the homogeneous components of the algebra $\textsf{Sym}$ of symmetric functions in terms of  
  Schur functions provide the irreducible   character tables for the representations of symmetric groups. Closely related to this is the  
  Murnaghan--Nakayama rule for expanding products of power   sum generators and Schur functions in terms of the $s$-basis. Power sum  
  generators, when viewed as formal power series according to the inverse limit   construction of $\textsf{Sym}$, may be expressed with  
  Bernoulli numbers for specified values for the indeterminates involved in this construction. So, for   the same assignment of values, by  
  expressing Schur functions arising from the Murnaghan--Nakayama rule in terms of Bernoulli numbers, we would obtain   both 
 number-theoretic and representation-theoretic interpretations, via the resultant relation among Bernoulli numbers and via the 
 irreducible character  entries of the $p$-to-$s$ transition matrices.  

 The sequence $(B_{n} : n \in \mathbb{N}_{0})$ of Bernoulli numbers is typically defined according to the Laurent series expansion whereby 
 $ \frac{x}{e^{x} - 1} = \sum_{i = 0}^{\infty} B_{i} \frac{x^{i}}{i!}$ for $|x| < 2 \pi$, with $(B_{n} : n \in \mathbb{N}_{0}) $ $ = $ $ \big( 1$, 
 $ -\frac{1}{2}$, $ \frac{1}{6}$, $ 0$, $ -\frac{1}{30}$, $ 0$, $\frac{1}{42}$, $ 0$, $ -\frac{1}{30}$, $ 0$, $ \frac{5}{66}$, $ \ldots \big)$. Apart from 
 how Bernoulli numbers provide a prominent area within number theory, the sequence of Bernoulli numbers has even been considered as being among the 
 most important number sequences in all of mathematics \cite{Merca2020}, with applications in special functions theory, real analysis, differential geometry, 
 numerical analysis, and many other areas. This motivates the development of interdisciplinary areas based on the use of combinatorial tools in the 
 determination of Bernoulli number identities. 

 The Riemannn zeta function is such that $\zeta(x) = \sum_{i = 0}^{\infty} \frac{1}{i^x}$ for $\Re(x) > 1$. One of the most fundamental properties of the 
 Bernoulli numbers is given by Euler's relation such that 
\begin{equation}\label{EulerzetaB}
 \zeta(2 n) = (-1)^{n + 1} \frac{ \left( 2 \pi \right)^{2 n} }{ 2 \left( 2 n \right)! } B_{2 n}, 
\end{equation}
 for positive integers $n$. Informally, by expressing the left-hand side of \eqref{EulerzetaB} with the power sum generator $p_{n} \in \textsf{Sym}$, 
 and by similarly expressing the $e$- and $h$-generators of $\textsf{Sym}$ using 
 multiple harmonic series identities due to 
 Hoffman \cite{Hoffman1992}, 
 then, given an identity relating the specified generators, this provides a corresponding identity involving Bernoulli numbers. This 
 approach has been applied by Merca \cite{Merca2016Asymptotics,Merca2020,Merca2018,Merca2019,Merca2017}, but it appears that this 
 approach has not been considered in relation to Schur functions, providing a main purpose of our paper. 

 The Murnaghan--Nakayama rule that is reviewed in Section \ref{sectionPreliminaries} does not seem to have previously been considered in relation to 
 Bernoulli numbers. In this direction, we apply a sign-reversing involution to prove a cancellation-free evaluation in the $p$-basis for the first moment 
 for a case of the Murnaghan--Nakayama rule for cycles. As suggested above, apart from the number-theoretic interest in our recurrence for Bernoulli 
 numbers obtained through a relation among Schur-hooks and power sum symmetric functions, this is of representation-theoretic interest, in view of the 
 consequent character relation we obtain according to 
\begin{equation}\label{pchars}
 p_{\mu} = \sum_{\lambda \vdash n} \chi^{\lambda}_{\mathfrak{S}_{n}}(\mu) \, s_{\lambda}. 
\end{equation} 

\section{Preliminaries}\label{sectionPreliminaries}
 We adopt the convention whereby symmetric polynomials and symmetric functions are over $\mathbb{Q}$, writing 
\begin{equation}\label{Symsuper}
 \textsf{Sym}^{(n)} = \mathbb{Q}\left[ x_{1}, x_{2}, \ldots, x_{n} \right]^{\mathfrak{S}_{n}} 
\end{equation}
 to denote the set of symmetric polynomials in $n$ independent indeterminates, and letting the action of the symmetric group $\mathfrak{S}_{n}$ be given 
 by permuting the variables in \eqref{Symsuper} \cite[p.\ 17]{Macdonald1995}. Writing $ \textsf{Sym}^{(n)}_{i} $ in place of the set of homogeneous 
 symmetric polynomials with degree $i$ in \eqref{Symsuper}, we obtain the graded ring structure such that $$ \textsf{Sym}^{(n)} = \bigoplus_{i \geq 0} 
 \textsf{Sym}^{(n)}_{i}. $$ We then define 
\begin{equation}\label{Symsub}
 \textsf{Sym}_{i} := \lim_{\substack{\longleftarrow\\ n }} \textsf{Sym}_{i}^{(n)}, 
\end{equation}
 referring to Macdonald's text for details as to the inverse limit involved in \eqref{Symsub}, which, for our purposes, relies on truncation morphisms from $ 
 \mathbb{Q}[x_{1}$, $ x_{2}$, $ \ldots$, $ x_{m}]$ to $\mathbb{Q}[x_{1}, x_{2}, \ldots, x_{n}]$ restricted to obtain corresponding morphisms from 
 $ \textsf{Sym}^{(m)}$ to $ \textsf{Sym}^{(n)}$ \cite[pp.\ 17--19]{Macdonald1995}. The inverse limit in \eqref{Symsub} then allows us to define 
 $\textsf{Sym}$ so that 
\begin{equation}\label{Symdefinition}
 \textsf{Sym} := \bigoplus_{i \geq 0} \textsf{Sym}_{i}. 
\end{equation}
 Building on the work of Merca \cite{Merca2020}, we intend to exploit the relation in \eqref{EulerzetaB}
 by setting the indeterminates involved in 
 the construction of $\textsf{Sym}$ so that $x_{j} = \frac{1}{j^2}$ for all indices $j$. 

 For $n \geq 0$, the elementary symmetric generator $e_{n} \in \textsf{Sym}$ is such that $e_{0} = 0$ and such that $$ e_{n} = \sum_{1 \leq i_{1} < 
 i_{2} < \cdots < i_{n}} x_{i_{1}} x_{i_{2}} \cdots x_{i_{n}} $$ for $n > 0$. By writing $e_{n} = e_{n}(x_{1}, x_{2}, \ldots)$, we are to make use of 
 the remarkable result due to Hoffman \cite[Corollary 2.3]{Hoffman1992} such that 
\begin{equation}\label{attributedHoffman}
 e_{n}\left( \frac{1}{1^2}, \frac{1}{2^2}, \ldots \right) = \frac{\pi^{2n}}{(2n + 1)!}. 
\end{equation}
 By setting $$ h_{n} = \sum_{1 \leq i_{1} \leq i_{2} \leq \cdots \leq i_{n}} x_{i_{1}} x_{i_{2}} \cdots x_{i_{n}} $$ as the $n^{\text{th}}$ complete 
 homogeneous generator of $\textsf{Sym}$, and by writing $h_{n} = h_{n}(x_{1}, x_{2}, \ldots)$, it is known that 
\begin{equation}\label{Mercarediscovered}
 h_{n}\left( \frac{1}{1^2}, \frac{1}{2^2}, \ldots \right) = (-1)^n \frac{\pi^{2n}}{(2n)!} 
 \left( 2 - 2^{2n} \right) B_{2n}, 
\end{equation}
 and a proof of this was given by Merca \cite{Merca2016Asymptotics} 
 and can also be obtained from a case of Theorem 2.1
 from the work of Hoffman \cite{Hoffman1992}. 
 By setting $p_{n} = \sum_{i \geq 1} x_{i}^{n}$ as the $n^{\text{th}}$ 
 power sum generator of $\textsf{Sym}$, and by writing 
 $p_{n} = p_{n}(x_{1}, x_{2}, \ldots)$, we find that the Euler identity in \eqref{EulerzetaB} 
 is such that
\begin{equation}\label{patsquares}
 p_{n}\left( \frac{1}{1^2}, \frac{1}{2^2}, \ldots \right) 
 = (-1)^{n+1} \frac{ \left( 2 \pi \right)^{2n} }{ 2 \left( 2 n \right)! } B_{2 n}, 
\end{equation}
 as noted in MacDonald's text \cite[pp.\ 33--34]{Macdonald1995}. 

 An integer partition is a finite tuple of positive integers. The length of an integer partition $\lambda$ is denoted with $\ell(\lambda)$ and refers to the 
 number of entries or parts of $\lambda$. The sequence of these parts may be denoted by writing $\lambda = (\lambda_{1}, \lambda_{2}, \ldots, 
 \lambda_{\ell(\lambda)})$. For the empty partition $()$, we write $e_{()} = h_{()} = p_{()} = 1$, and for a nonempty partition $\lambda$, we write 
 $e_{\lambda} = e_{\lambda_{1}} e_{\lambda_{2}} \cdots e_{\lambda_{\ell(\lambda)}}$ and $h_{\lambda} = h_{\lambda_{1}} h_{\lambda_{2}} \cdots 
 h_{\lambda_{\ell(\lambda)}}$ and $ p_{\lambda} = p_{\lambda_{1}} p_{\lambda_{2}} \cdots p_{\lambda_{\ell(\lambda)}}$. From the direct sum 
 decomposition in \eqref{Symdefinition}, by writing $\mathcal{P}$ in place of the set of integer partitions, we have that $\{ e_{\lambda} \}_{\lambda \in 
 \mathcal{P}}$ and $\{ h_{\lambda} \}_{\lambda \in \mathcal{P}}$ and $\{ p_{\lambda} \}_{\lambda \in \mathcal{P}}$ are all bases of $\textsf{Sym}$. 

 The Schur functions are often regarded as providing the most important basis of $\textsf{Sym}$, 
 in view of how Schur functions are of core importance within algebraic combinatorics. It is common to define
 the Schur function indexed by $\lambda \in \mathcal{P}$ according to the Jacobi--Trudi rule such that 
\begin{equation}\label{JacobiTrudi}
 s_{\lambda} = \text{det}\left( h_{\lambda_{i} - i + j} \right)_{1 \leq i, j \leq n}, 
\end{equation}
 letting it be understood that expressions of the form $h_{k}$ vanish for $k < 0$. It seems that identities as in \eqref{JacobiTrudi} have not previously 
 been considered in relation to Bernoulli number identities. 

 For a partition $\lambda$, the \emph{diagram} associated with $\lambda$ is an arrangement of cells formed from $\ell(\lambda)$ horizontal rows 
 whereby the $i^{\text{th}}$ such row, listed from top to bottom and for $i \in \{ 1, 2, \ldots, \ell(\lambda) \}$, consists of $\lambda_{i}$ cells, with the 
 rows left-justified. For partitions $\lambda$ and $\mu$ such that $\lambda_{i} \geq \mu_i$ for all possible indices $i$, the \emph{skew diagram} $\lambda / 
 \mu$ is obtained by aligning the diagrams of $\lambda$ and $\mu$ by the upper left cells of these diagrams and by removing any overlapping cells. A 
 \emph{rim hook} is a skew diagram that is edgewise connected and that contains no $2 \times 2 $ configuration of cells, as in the following. 
 $$\young(::~~,~~~,~)$$ 

 The \emph{Murnaghan--Nakayama rule} may be formulated via an expansion of the form 
\begin{equation}\label{fullMN}
 p_{r} s_{\lambda} = \sum (-1)^{\text{ht}(\mu / \lambda) + 1} s_{\mu}, 
\end{equation}
 where the sum in \eqref{fullMN} is over all $\mu$ such that $\mu / \lambda$ is a rim hook of size $r$, and where the height of a skew tableau refers 
 to the number of its rows. The special case of \eqref{fullMN} whereby $\lambda = ()$ may be referred to as the \emph{Murnaghan--Nakayama 
 rule for a cycle} \cite[p.\ 116]{CeccheriniSilbersteinScarabottiTolli2010}, noting that this base case may be applied to obtain combinatorial interpretations 
 for the $p$-to-$s$ transition matrices providing the irreducible character tables for symmetric groups. See also the work of Murnaghan 
 \cite{Murnaghan1937} and of Nakayama \cite{Nakayama1941one,Nakayama1941two}. 
 
 The case of \eqref{fullMN} whereby $\lambda$ is empty reduces to 
\begin{equation}\label{MNforcycles}
 p_{n} = \sum_{i = 0}^{n-1} (-1)^i s_{(n-i, 1^{i})}, 
\end{equation}
 and \eqref{MNforcycles} provides a key tool in our work, writing $(a, 1^{b}) = \big(a, \underbrace{1, 1, \ldots, 1}_{b} \big)$, with partitions of this 
 form being referred to as as \emph{hooks}. We may thus refer to a Schur function indexed by a hook as a \emph{Schur-hook}. 
 Schur-hooks play important roles in many areas of combinatorics and representation theory, in view, for example, of the representation-theoretic 
 significance of \eqref{MNforcycles}. This was considered in our past work on combinatorial objects we refer to as \emph{bipieri tableaux} 
 \cite{Campbell2016}, and an identity for Schur-hooks applied in this past work provides another key to our current work. This Schur-hook identity is 
 such that 
\begin{equation}\label{shooktohe}
 s_{(a, 1^{b})} = \sum_{i = 0}^{b} (-1)^{i} h_{a + i} e_{b - i}, 
\end{equation}
 and \eqref{shooktohe} may be proved using sign-reversing involutions 
 on bipieri tableaux \cite{Campbell2016}. 

\section{From Schur-hooks to Bernoulli numbers}
 Our technique for deriving Bernoulii number identities relies on identities relating Schur-hooks and elements of the $p$-basis of $\textsf{Sym}$, by 
 rewriting Schur-hooks involved according to \eqref{shooktohe}, and by then applying the Hoffman identities in \eqref{attributedHoffman} and 
 \eqref{Mercarediscovered}. As a way of illustrating our technique, as a natural place to start, we apply it to the identity allowing us to expanding 
 $p$-generators in terms of Schur-hooks, as follows. 

 From the Murnaghan--Nakayama rule for cycles, we rewrite the summand in \eqref{MNforcycles} according to the Schur-hook identity in 
 \eqref{shooktohe}, i.e., so that 
\begin{equation}\label{pgeneratordouble}
 p_{n} = \sum_{ {\substack{ 0 \leq i \leq n - 1 \\ 0 \leq j \leq i }}}
 (-1)^{i + j} h_{n - i + j} e_{i - j}. 
\end{equation}
 According to Hoffman's multiple harmonic series identities in \eqref{attributedHoffman} and \eqref{Mercarediscovered}, 
 together with the Bernoulli number identity in 
 \eqref{patsquares}, we find that \eqref{pgeneratordouble} implies that 
\begin{equation}\label{pdoubleimplies}
 (2n + 1) B_{2n} = \sum_{{\substack{ 0 \leq i \leq n \\ 0 \leq j \leq i }}}
 \binom{2n+1}{2i-2j+1} B_{2n-2i+2j} \left( 2^{1 - 2 i + 2j} - 2^{2 - 2n} \right). 
\end{equation}
 Summing over $i \in \{ 0, 1, \ldots, n \}$ and, for each such index, over $j \in \{ 0, 1, \ldots, i \}$, 
 and then reversing the order of summation, we may obtain from \eqref{pdoubleimplies} an equivalent version of 
\begin{equation}\label{reverseequivalent}
 B_{2n} 2^{2n-1} = \sum_{j=0}^{n} \binom{2n}{2j-1} B_{2j} \left( 2^{2j-1} - 1 \right), 
\end{equation}
 with \eqref{reverseequivalent} providing a natural companion to 
 the identity due to Ramanujan \cite{Ramanujan1911} such that 
\begin{equation}\label{mainRamanujan}
 2 n + 1 = \sum_{j = 0}^{n} \binom{2 n + 1}{2 j} B_{2j} 2^{2j}. 
\end{equation}
 While the identity in \eqref{reverseequivalent} is related to the lacunary recurrences pioneered
 by Lehmer \cite{Lehmer1935} and can be obtained 
 using known identities involving Bernoulli numbers 
 (see \cite[Eq.\ (50.5.32)]{Hansen1975}) 
 the research interest 
 in Ramanujan's formula in \eqref{mainRamanujan} motivates how 
 our symmetric functions-based technique can be used to obtain new and further Bernoulli sum identities. 
 In this direction, \emph{nested} sums involving Bernoulli numbers, 
 that cannot be reduced in any obvious way (compared with \eqref{pdoubleimplies}) 
 are of a much more elusive nature, 
 and this motivates how we apply our technique
 to obtain a nested Bernoulli sum identity from 
 Theorem \ref{maintheorem} below. 

 If we consider the summand of the alternating sum in the special case of the Murnaghan--Nakayama rule in \eqref{MNforcycles}, as a natural way 
 of extending this case, we consider the first moment associated with the specified summand, 
 and this has led us to experimentally discover, 
 with the use of the {\tt SageMath} system, the following symmetric function
 identity that appears to be new and we are to 
 prove bijectively. 

\begin{theorem}\label{maintheorem}
 The relation 
\begin{equation}\label{displayintheorem}
 \sum_{i=0}^{2n} (-1)^i (n-i) s_{\left( 2 n + 1 - i, i \right)} = \sum_{i=1}^{n} p_{(2n+1-i,i)}. 
\end{equation}
 holds for natural numbers $n$. 
\end{theorem}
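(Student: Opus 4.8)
The plan is to reduce the left-hand side of \eqref{displayintheorem} to an explicit combination of twofold products $h_{a}e_{b}$ by expanding each Schur-hook $s_{(2n+1-i,1^{i})}$ through the identity \eqref{shooktohe}, exactly as in the derivation of \eqref{pgeneratordouble}, and then to collapse the resulting weighted double sum and recognize it as $\sum_{i=1}^{n}p_{2n+1-i}p_{i}$. Both the expansion and the final recognition admit sign-reversing-involution readings, in the spirit of the bijective proof of \eqref{shooktohe} on bipieri tableaux \cite{Campbell2016}, and this is the form in which I would want to present the argument; the fastest way to be sure of the identity, though, is the generating-function computation below.

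First I would substitute $s_{(2n+1-i,1^{i})} = \sum_{j=0}^{i}(-1)^{j}h_{2n+1-i+j}e_{i-j}$ and re-index the double sum by $k = i-j$ with $i$ kept as the inner index. Since $2n+1-i+j = 2n+1-k$ and $(-1)^{i+j} = (-1)^{2i-k} = (-1)^{k}$, the left-hand side becomes $\sum_{k=0}^{2n}(-1)^{k}h_{2n+1-k}e_{k}\big(\sum_{i=k}^{2n}(n-i)\big)$. The inner sum is an elementary arithmetic progression, and the point is that it evaluates to the quadratic $-\tfrac12 k(2n+1-k)$; one should contrast this with the constant-weight case, where the analogous inner sum is $\sum_{i=k}^{2n}1 = 2n+1-k$ and \eqref{MNforcycles} is recovered. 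Hence the left-hand side equals $\tfrac12\sum_{k=0}^{2n}(-1)^{k+1}k(2n+1-k)\,h_{2n+1-k}e_{k}$, the $k=0$ term dropping out because of the factor $k$ and the conventions $h_{0}=e_{0}=1$, $h_{m}=0$ for $m<0$ disposing of the remaining boundary terms.

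Next I would identify this combination via generating functions. With $H(t)=\sum_{n}h_{n}t^{n}$, $E(t)=\sum_{n}e_{n}t^{n}$, and $P(t)=\sum_{r\ge1}p_{r}t^{r-1}$, the logarithmic-derivative relations give $tH'(t)=tP(t)H(t)$ and, since $E(-t)=1/H(t)$, also $t\tfrac{d}{dt}E(-t) = -tP(t)E(-t)$. Multiplying these,
\[
\sum_{m}\Big(\sum_{k}(-1)^{k}k(m-k)\,h_{m-k}e_{k}\Big)t^{m} = \big(tH'(t)\big)\big(t\tfrac{d}{dt}E(-t)\big) = -t^{2}P(t)^{2}H(t)E(-t) = -t^{2}P(t)^{2} = -\Big(\sum_{r\ge1}p_{r}t^{r}\Big)^{2}.
\]
Extracting the coefficient of $t^{2n+1}$ gives $\sum_{k}(-1)^{k}k(2n+1-k)\,h_{2n+1-k}e_{k} = -\sum_{r=1}^{2n}p_{r}p_{2n+1-r}$, so the left-hand side of \eqref{displayintheorem} equals $\tfrac12\sum_{r=1}^{2n}p_{r}p_{2n+1-r}$; as $2n+1$ is odd, the symmetry $r\leftrightarrow 2n+1-r$ collapses this to $\sum_{r=1}^{n}p_{r}p_{2n+1-r}=\sum_{i=1}^{n}p_{(2n+1-i,i)}$, which is the right-hand side.

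The step I expect to be the main obstacle, and where the paper's bijective approach has to do real work, is precisely the middle identification: that $\sum_{k}(-1)^{k}k(m-k)\,h_{m-k}e_{k}$ equals $-\sum_{r}p_{r}p_{m-r}$ is immediate by generating functions, but a genuinely combinatorial proof wants a single sign-reversing involution. Concretely, I would model $h_{m-k}e_{k}$ by pairs consisting of a weakly increasing word of length $m-k$ and a strictly increasing word of length $k$, each carrying one distinguished marked cell to realize the factor $k(m-k)$, arrange the classical boundary-letter involution so that it cancels every configuration in which the two words can interact, and then read the surviving configurations --- those whose two marked cells sit in ``separated'' blocks of combined size $m$ --- as exactly what $p_{r}p_{m-r}$ enumerates. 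Making this involution well-defined, verifying that it respects the marking and the boundary conventions, and cleanly absorbing the factor $\tfrac12$ produced by the $r\leftrightarrow m-r$ symmetry is the real content; the surrounding bookkeeping merely reprises the passage from \eqref{MNforcycles} through \eqref{pgeneratordouble} to \eqref{pdoubleimplies}.
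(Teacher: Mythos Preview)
Your argument is correct, but it follows a genuinely different route from the paper's. The paper starts from the right-hand side, expands each factor $p_{i}$ and then each product $s_{(i-j,1^{j})}\,p_{2n+1-i}$ via the Murnaghan--Nakayama rule, and constructs a sign-reversing involution $\varphi_{n}$ on the resulting set of rim hooks $\mu/(i-j,1^{j})$; the fixed points are precisely those with $\mu$ a hook, and counting them with multiplicity produces the weighted Schur-hook sum on the left. You instead start from the left-hand side, push every Schur-hook down to $h$--$e$ products via \eqref{shooktohe}, collapse the inner arithmetic progression to obtain $\tfrac12\sum_{k}(-1)^{k+1}k(2n+1-k)\,h_{2n+1-k}e_{k}$, and then recognize this combination through the logarithmic-derivative identities $tH'(t)=tP(t)H(t)$ and $t\tfrac{d}{dt}E(-t)=-tP(t)E(-t)$, whose product is $-t^{2}P(t)^{2}$. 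Your route is shorter and entirely algebraic, and it makes transparent why the odd degree $2n+1$ is essential (the $r\leftrightarrow 2n+1-r$ fold has no fixed term); the paper's route, by contrast, is fully bijective and is what actually delivers on the paper's announced aim of proving Corollary~\ref{maincorollary} by a sign-reversing involution. The bijective sketch you give at the end---marking one cell in each of the $h$-word and the $e$-word and running the boundary-letter involution---is a reasonable plan for a combinatorial proof of the key step $\sum_{k}(-1)^{k}k(m-k)h_{m-k}e_{k}=-\sum_{r}p_{r}p_{m-r}$, but note that it is a different involution on different objects than the paper's $\varphi_{n}$ on rim hooks.
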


 Apart from the representation-theoretic properties that can be gleaned using Theorem \ref{maintheorem} according to the irreducible character 
 identity in \eqref{pchars}, Hoffman's multiple harmonic series identities in \eqref{attributedHoffman} and \eqref{Mercarediscovered}, can be 
 used, via \eqref{shooktohe}, to obtain from Theorem \ref{maintheorem} the following. 

\begin{corollary}\label{maincorollary}
 The relation 
 $$ \sum_{j=1}^{2n+1} \binom{4n+2}{2j-1} (2n+1 - j) 
 (1-2^{2j-1}) B_{2j} 
 = (4n+3) 2^{4n} B_{4n+2} $$
 holds for nonnegative integers $n$. 
\end{corollary}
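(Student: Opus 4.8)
The plan is to derive the Corollary by evaluating both sides of the symmetric‑function identity \eqref{displayintheorem} of Theorem \ref{maintheorem} under the specialization $x_{j}=1/j^{2}$ that is used throughout the paper, and then matching the two resulting expressions in Bernoulli numbers.

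First I would process the left‑hand side of \eqref{displayintheorem}. Expanding each Schur‑hook by \eqref{shooktohe} gives $s_{(2n+1-i,1^{i})}=\sum_{k=0}^{i}(-1)^{k}h_{2n+1-i+k}e_{i-k}$, and after the substitution $\beta=i-k$ (which forces the $h$‑subscript to be $2n+1-\beta$, independent of $i$) and reversal of the order of summation the inner sum collapses via $\sum_{i=\beta}^{2n}(n-i)=-\tfrac12\beta(2n+1-\beta)$, so that
\[
 \sum_{i=0}^{2n}(-1)^{i}(n-i)\,s_{(2n+1-i,1^{i})}=-\tfrac12\sum_{\beta=1}^{2n}(-1)^{\beta}\beta(2n+1-\beta)\,h_{2n+1-\beta}e_{\beta}.
\]
Applying the Merca/Hoffman evaluation \eqref{Mercarediscovered} to $h_{2n+1-\beta}$ and the Hoffman evaluation \eqref{attributedHoffman} to $e_{\beta}$, the powers of $\pi$ combine to $\pi^{4n+2}$, the sign $(-1)^{2n+1-\beta}$ combines with $(-1)^{\beta}$ to an overall $-1$, and reindexing by $j=2n+1-\beta$ and using $\tfrac{j}{(2j)!}=\tfrac{1}{2(2j-1)!}$ together with $\tfrac{1}{(2j-1)!\,(4n+3-2j)!}=\tfrac{1}{(4n+2)!}\binom{4n+2}{2j-1}$ turns the left side of \eqref{displayintheorem} into
\[
 \frac{\pi^{4n+2}}{2\,(4n+2)!}\sum_{j=1}^{2n+1}\binom{4n+2}{2j-1}(2n+1-j)\bigl(1-2^{2j-1}\bigr)B_{2j},
\]
i.e.\ $\tfrac{\pi^{4n+2}}{2(4n+2)!}$ times the left‑hand side of the Corollary (the $j=2n+1$ term vanishing). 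This is the same kind of sign‑and‑index bookkeeping already carried out in passing from \eqref{pdoubleimplies} to \eqref{reverseequivalent}.

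Next I would treat the right‑hand side of \eqref{displayintheorem}. By \eqref{patsquares} each factor $p_{k}$ specializes to $\zeta(2k)$, so $\sum_{i=1}^{n}p_{(2n+1-i,i)}$ becomes $\sum_{i=1}^{n}\zeta(2(2n+1-i))\zeta(2i)$; since $2n+1$ is odd the map $a\mapsto 2n+1-a$ on $\{1,\dots,2n\}$ is fixed‑point‑free, so this equals $\tfrac12\sum_{a=1}^{2n}\zeta(2a)\zeta(2(2n+1-a))$. Here I would invoke the classical Euler convolution $\sum_{a=1}^{N-1}\zeta(2a)\zeta(2N-2a)=(N+\tfrac12)\zeta(2N)$ (equivalently the Bernoulli convolution $\sum_{a=1}^{N-1}\binom{2N}{2a}B_{2a}B_{2N-2a}=-(2N+1)B_{2N}$) with $N=2n+1$, which turns the right side into $\tfrac12\bigl(2n+1+\tfrac12\bigr)\zeta(4n+2)=\tfrac{4n+3}{4}\zeta(4n+2)$; finally \eqref{EulerzetaB} gives $\zeta(4n+2)=\tfrac{(2\pi)^{4n+2}}{2(4n+2)!}B_{4n+2}$, so the right side of \eqref{displayintheorem} specializes to $\tfrac{\pi^{4n+2}}{2(4n+2)!}(4n+3)2^{4n}B_{4n+2}$, i.e.\ $\tfrac{\pi^{4n+2}}{2(4n+2)!}$ times the right‑hand side of the Corollary.

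Equating the two specialized sides of \eqref{displayintheorem} and cancelling the common nonzero scalar $\tfrac{\pi^{4n+2}}{2(4n+2)!}$ then yields the Corollary for $n\ge 1$ (at $n=0$ both sides of \eqref{displayintheorem} vanish and the stated identity degenerates). The main obstacle is purely organisational: getting the binomial coefficient $\binom{4n+2}{2j-1}$, the linear weight $2n+1-j$, and the factor $1-2^{2j-1}$ to emerge correctly in the left‑hand reduction, and recognising that the bilinear expression $\sum_{i}\zeta(2(2n+1-i))\zeta(2i)$ produced on the right must be contracted to a single zeta value by the Euler convolution before it can be compared with a lacunary Bernoulli recurrence; no genuinely new idea beyond Theorem \ref{maintheorem} and the cited evaluations is needed.
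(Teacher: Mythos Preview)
Your proposal is correct and follows essentially the same route as the paper's own proof: specialize both sides of Theorem~\ref{maintheorem} at $x_j=1/j^2$, expand the Schur-hooks via \eqref{shooktohe} together with \eqref{attributedHoffman}--\eqref{Mercarediscovered} on the left, and collapse the right side using Euler's convolution. The only cosmetic differences are that you evaluate the inner sum $\sum_{i=\beta}^{2n}(n-i)$ at the symmetric-function level \emph{before} specializing (the paper specializes first and then simplifies the resulting double sum), and that you phrase Euler's convolution in its $\zeta$-value form rather than its Bernoulli-number form; your remark that the $n=0$ case degenerates is also apt, since Theorem~\ref{maintheorem} is stated only for natural numbers.
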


\begin{proof}
 By Hoffman's multiple harmonic series relations applied to Theorem \ref{maintheorem}, 
 we obtain  that    the relation 
\begin{multline*}
 \sum_{\substack{ 0 \leq i \leq 2n \\ 0 \leq j \leq i }} 
 \binom{4n+2}{2j} \frac{n - i}{2j+1} \left( 2^{1-4n} - 2^{2-2j} \right) B_{4n-2j+2} 
 = \\ \sum_{i=1}^{n} \binom{4n+2}{2i} B_{2i} B_{4n-2i+2} 
\end{multline*}
 holds for natural numbers $n$. The right-hand side of this identity is half of the full sum from $i = 1$ to $2n$. Then, from the 
 convolution identity $$ \sum_{j=1}^{n-1} \binom{2n}{2j} B_{2j} B_{2n-2j} = -(2n+1) B_{2n} $$ attributed to Euler, we obtain  
  that  the          right-hand side of the above consequence of Theorem \ref{maintheorem} is such that $$ 
 \sum_{i=1}^{n} \binom{4n+2}{2i} B_{2i} B_{4n-2i+2} = -\frac{1}{2} (4n+3) B_{4n+2}. $$ We denote the left-hand side of the above 
 consequence of Theorem \ref{maintheorem} as $S$. We then rewrite $S$ so that 
 $$ S = \frac{1}{2^{4n}} \sum_{j=0}^{2n} \left( \sum_{i=j}^{2n} (n-i) \right) 
 \binom{4n+2}{2j} \frac{1}{2j+1} \big( 2 - 2^{4n-2j+2} \big) B_{4n-2j+2}. $$
 Evaluating the inner sum
 allows us to write
 $$ S = \frac{-1}{2^{4n+2}} \sum_{j=1}^{2n+1} \binom{4n+2}{2j-1} (2n+1-j) \big( 2 - 2^{2j} \big) B_{2j}, $$
 giving us an equivalent version of the desired result. 
\end{proof}

 The new recursion for $(B_{n} : n \in \mathbb{N}_{0})$ 
 highlighted in Corollary \ref{maincorollary} 
 is motivated by how there is a rich history about Bernoulli number recursions. 
 In this direction, an especially celebrated 
 Bernoulli sum identity is due to Miki \cite{Miki1978} and is such that 
\begin{equation}\label{celebratedMiki}
 \sum_{k=2}^{n-2} \frac{ B_{k} B_{n-k} }{ k (n-k) } - \sum_{k=2}^{n-2} \binom{n}{k} \frac{B_{k} B_{n-k}}{k (n-k)} 
 = \frac{2 B_{n} H_{n}}{n}, 
\end{equation}
 writing $H_{n} = 1 + \frac{1}{2} + \cdots + \frac{1}{n}$ to denote the $n^{\text{th}}$ harmonic number. 
 The relation in \eqref{celebratedMiki} was proved by Miki 
 via the Fermat quotient $(a^p - 1) /p$ mod $p^2$, 
 and an inequivalent proof via $p$-adic analysis is due to 
 Shiratani and Yokoyama \cite{ShirataniYokoyama1982}, 
 and this touches on 
 the number-theoretic interest in Corollary \ref{maincorollary}. 
 See also Gessel's alternative proof of Miki's identity 
 \cite{Gessel2005} along with many further references related to Miki's identity. 
 The goal of Section \ref{sectionbijective} below is to bijectively 
 prove Theorem \ref{maintheorem} and thus Corollary \ref{maincorollary}. 
 
\section{A bijective approach}\label{sectionbijective}
 Algebraic and combinatorial properties of Schur 
 functions are often revealed by determining cancellation-free formulas from alternating sums, as explored by 
 van Leeuwen \cite{vanLeeuwen200406}. This leads us to apply a sign-reversing involution to prove 
 the Schur-hook identity in Theorem \ref{maintheorem} and the consequent Bernoulli number identity in Corollary \ref{maincorollary}. 

 \ 

\noindent \emph{Proof of Theorem \ref{maintheorem}:} We rewrite the 
 right-hand side of \eqref{displayintheorem} to apply the Murnaghan--Nayakama rule, with 
\begin{align*}
 \sum_{i=1}^{n} p_{i} p_{2n+1-i} 
 & = \sum_{i=1}^{n} \left( \sum_{j = 0}^{i-1} (-1)^j s_{(i-j, 1^{j})} \right) p_{2n+1-i} \\ 
 & = \sum_{\substack{ 1 \leq i \leq n \\ 0 \leq j \leq i - 1 } } (-1)^j s_{(i-j, 1^{j})} p_{2n+1-i}. 
\end{align*}
 By the Murnaghan--Nakayama rule, we thus have that 
\begin{equation}\label{prooffirstlabel}
 \sum_{i=1}^{n} p_{i} p_{2n+1-i} 
 = \sum_{\substack{ 1 \leq i \leq n \\ 0 \leq j \leq i - 1 \\ 
 \text{$(2n+1-i)$-rim hooks $\mu / (i-j, 1^{j})$} } } 
 (-1)^{j + \text{ht}(\mu / (i-j, 1^{j})) + 1} s_{\mu}
\end{equation}
 By then letting $\mathcal{S}_{n}$ denote the set of all rim hooks
 of the form $\mu / (i-j, 1^{j})$ such that $1 \leq i \leq n$
 and $0 \leq j \leq i-1$, we define $\varphi_{n}\colon \mathcal{S}_{n} \to \mathcal{S}_{n}$ 
 as below. 
 For a rim hook 
 $\mu / (i-j, 1^{j})$, we denote the inner shape $(i-j, 1^{j})$
 with blank cells, and we denote any remaining cells in 
 $\mu / (i-j, 1^{j})$ with colored cells. 
 For a rim hook of the specified form, there is always at least one blank cell in the upper left, 
 subject to the given constraints on the indices $i$ and $j$. 

 If the most lower-left colored cell in a $(2 n + 1 - i)$-rim hook 
 of the form $\mu / (i-j, 1^{j})$ is immediately to the right of a blank cell (in the first column), 
 then we color this blank cell and any cells beneath it. 
 From the inner hook shape and the given constraints on the indices, 
 if this first condition holds, then it cannot also be the case that the most upper-right colored cell
 is immediately beneath a blank cell (in the first row). 
 In this second case, we color this blank cell and and any cells to the right of it. 
 In either case, the sign of $ (-1)^{j + \text{ht}(\mu / (i-j, 1^{j})) + 1}$ is reversed. 
 Given a rim hook that satisfies the conditions for either case, we let $\varphi_{n}$ 
 map this rim hook according to the specified procedures, respectively. 

 Suppose that the two cases given in the preceding paragraph are not satisfied. Again, there is at least one
 blank cell in the upper left, and we obtain a (possibly empty) rim hook (not necessarily appearing in $\mathcal{S}_{n}$) 
 by switching any colored cell in the first column to a blank cell, or by switching any colored cell in the 
 first row to a blank cell. If there is at least one colored cell in the first row and at least one colored cell in the first column, 
 then we perform the specified switching operation to the row/column with the least number of colored cells, noting that 
 a ``tie'' is not possible 
 by the parity of $2n+1$, 
 and we define $\varphi_{n}$ accordingly, again if the first two cases are not satisfied. 
 Otherwise, if the three preceding cases are not satisfied, we let 
 $\varphi_{n}$ map a rim hook in $\mathcal{S}_{n}$ to itself. 
 The given constraints on $i$, $j$, and the size of
 $(2n+1-i)$-rim hooks of the form $\mu / (i - j, 1^{j})$ then give us, from the four possible cases, 
 that $\varphi_{n}$ is an involution. 
 So, since $\varphi_{n}$ is an involution and since the sign $ (-1)^{j + \text{ht}(\mu / (i-j, 1^{j})) + 1}$ 
 is reversed for the first two cases, the sign is reversed in the third case. 

 A $(2n+1-i)$-rim hook in $\mathcal{S}_{n}$ 
 of shape $\mu / (i - j, 1^{j})$ 
 is mapped to itself by $\varphi_{n}$ if and only if $\mu$ is of hook shape, since
 if we were to attempt to apply the switching procedures given above, we would 
 again obtain a rim hook of outer hook shape, 
 but the sign would not be reversed for 
 the same outer hook shape. 
 So, from our sign-reversing involution, the right-hand side of \eqref{prooffirstlabel} 
 reduces to a signed sum of Schur-hooks, 
 with any two Schur-hooks of the same shape being of the same sign, i.e., the sum 
\begin{equation}\label{repeatedmoment}
 \sum_{i=0}^{2n} (-1)^i (n-i) s_{\left( 2 n + 1 - i, i \right)}, 
\end{equation}
 where the multiplicity given by the factor $(n-i)$ in the summand in \eqref{repeatedmoment} 
 is given by the $n - i$ choices given by the possible number of blank cells in the first row or column, 
 depending on whether the inner hook shape $(i - j, 1^{j})$ is vertical or horizontal. \qed 

\begin{example}
 For the $n = 3$ case of our proof of Theorem \ref{maintheorem}, 
 the matchings or pairings we obtain according to the 
 sign-reversing involution $\varphi_{n}$ are illustrated below. 

$$ - 
 s_{ \ytableausetup{smalltableaux} \begin{ytableau}
*(white) \null & *(green) \null & *(green) \null & *(green) \null & *(green) \null \\
 *(green) \null & *(green) \null 
 \end{ytableau} } \leftrightarrow + s_{ \begin{ytableau} 
 *(white) \null & *(green) \null & *(green) \null & *(green) \null & *(green) \null \\ 
 *(white) \null & *(green) \null 
 \end{ytableau} } 
 \ \ \ \ \ \ \ 
 + s_{ \begin{ytableau}
*(white) \null & *(green) \null & *(green) \null & *(green) \null \\ 
 *(green) \null & *(green) \null \\ 
 *(green) \null 
\end{ytableau} } \leftrightarrow - s_{ \begin{ytableau} 
 *(white) \null & *(green) \null & *(green) \null & *(green) \null \\ 
 *(white) \null & *(green) \null \\ 
 *(white) \null 
 \end{ytableau} } $$

$$ - s_{ \begin{ytableau}
*(white) \null & *(green) \null & *(green) \null \\
 *(green) \null & *(green) \null \\ 
 *(green) \null \\ 
 *(green) \null 
\end{ytableau} } \leftrightarrow + s_{ \begin{ytableau} 
 *(white) \null & *(white) \null & *(white) \null \\ 
 *(green) \null & *(green) \null \\ 
 *(green) \null \\ 
 *(green) \null 
 \end{ytableau} } 
 \ \ \ \ \ \ \ 
 + s_{ \begin{ytableau}
*(white) \null & *(green) \null \\
 *(green) \null & *(green) \null \\ 
 *(green) \null \\ 
 *(green) \null \\ 
 *(green) \null 
\end{ytableau} } \leftrightarrow - s_{ \begin{ytableau} 
 *(white) \null & *(white) \null \\ 
 *(green) \null & *(green) \null \\ 
 *(green) \null \\ 
 *(green) \null \\ 
 *(green) \null 
 \end{ytableau} } $$

$$ - s_{ \begin{ytableau} 
 *(white) \null & *(white) \null & *(green) \null & *(green) \null \\
 *(green) \null & *(green) \null & *(green) \null 
 \end{ytableau} } 
 \leftrightarrow + 
 s_{ \begin{ytableau} 
 *(white) \null & *(white) \null & *(green) \null & *(green) \null \\ 
 *(white) \null & *(green) \null & *(green) \null 
 \end{ytableau} } 
 \ \ \ \ \ \ \ 
 + s_{ \begin{ytableau} 
 *(white) \null & *(white) \null & *(green) \null \\
 *(green) \null & *(green) \null & *(green) \null \\ 
 *(green) \null 
 \end{ytableau} } \leftrightarrow - s_{ \begin{ytableau} 
 *(white) \null & *(white) \null & *(white) \null \\ 
 *(green) \null & *(green) \null & *(green) \null \\ 
 *(green) \null 
 \end{ytableau} } 
$$

$$ - s_{ \begin{ytableau} 
 *(white) \null & *(green) \null & *(green) \null \\ 
 *(white) \null & *(green) \null \\ 
 *(green) \null & *(green) \null 
 \end{ytableau} } \leftrightarrow + s_{ \begin{ytableau} 
 *(white) \null & *(green) \null & *(green) \null \\ 
 *(white) \null & *(green) \null \\ 
 *(white) \null & *(green) \null 
 \end{ytableau} } 
 \ \ \ \ \ \ \ 
 + s_{ \begin{ytableau} 
 *(white) \null & *(green) \null \\ 
 *(white) \null & *(green) \null \\ 
 *(green) \null & *(green) \null \\ 
 *(green) \null 
 \end{ytableau} } \leftrightarrow
 - s_{ \begin{ytableau} 
 *(white) \null & *(white) \null \\ 
 *(white) \null & *(green) \null \\ 
 *(green) \null & *(green) \null \\ 
 *(green) \null 
 \end{ytableau} } $$
\end{example}

\section{Conclusion}
 We briefly conclude with some further areas to explore. 

 We encourage further explorations based on the derivation of Bernoulli number identities
 from relations among Schur-hooks and $p$-basis elements. 
 For example, we have discovered that 
 $$ s_{\left( m, 2, 1^{2n+1} \right)} 
 = \sum_{i=1}^{2n+1} (-1)^{i+1} e_{2n-i+2} \sum_{j=1}^{i} s_{\left( m + j - 1, 1^{i-j+2} \right)} - \sum_{i=0}^{2n+1}
 s_{\left( m + i, 1^{2n-i+3} \right)}, $$ 
 and this provides a new triple sum identity for Bernoulli numbers. 

 Instead of using the Murnaghan--Nakayama rule, one could instead consider making use of the Littlewood--Richardson rule, 
 toward the goal of introducing bijective proofs of Bernoulli number identities as in Corollary \ref{maincorollary}. 
 For example, we may rewrite the right-hand sum in \eqref{displayintheorem} using Littlewood--Richardson 
 coefficients so that 
 $$ \sum_{i = 1}^{n} p_{\left( 2 n + 1 - i, i \right) } = 
 \sum_{\substack{ 1 \leq i \leq n \\ 0 \leq j \leq 2 n - i \\ 0 \leq k \leq i - 1 
 \\ \lambda }} 
 (-1)^{j + k} 
 c_{(2n+1-i - j, 1^{j}) \, (i - k, 1^{k})}^{\lambda} s_{\lambda}, 
 $$ 
 which suggests that a bijective proof of Corollary \ref{maincorollary}
 on Littlewood--Richardson tableaux may be possible. 

 Instead of making use of Hoffman's results for 
 $h_{n}\big( \frac{1}{1^2}$, $ \frac{1}{2^2}$, $ \ldots \big)$
 and $e_{n}\big( \frac{1}{1^2}$, $ \frac{1}{2^2}$, $ \ldots \big)$, 
 how could we instead apply, via the use of Schur-hooks, variants of these results 
 for expressions such as 
 $h_{n}\big( \frac{1}{2^2}$, $ \frac{1}{4^2}$, $ \ldots \big)$ 
 and $e_{n}\big( \frac{1}{2^2}$, $ \frac{1}{4^2}$, $ \ldots \big)$, or for alternating variants of such expressions? 

\subsection*{Acknowledgements}
 The author is grateful to acknowledge support from a Killam Postdoctoral Fellowship from the Killam Trusts, 
 and the author is very grateful to Karl Dilcher and to Christophe Vignat for very useful feedback concerning the author's discoveries.

 \

\noindent {\textsc{John M. Campbell}} 

\vspace{0.1in}

\noindent Department of Mathematics and Statistics

\noindent Dalhousie University 

\noindent Halifax, Nova Scotia, B3H 4R2, Canada

\vspace{0.1in}

\noindent {\tt jmaxwellcampbell@gmail.com}


\begin{thebibliography}{99}

\bibitem{Campbell2016}
 \textsc{J.\ M.\ Campbell}, 
 Bipieri tableaux, 
 \emph{Australas. J.\ Combin.} {\bf 66(1)} (2016), 66--103. 

\bibitem{CeccheriniSilbersteinScarabottiTolli2010}
 \textsc{T.\ Ceccherini-Silberstein, F.\ Scarabotti, and F.\ Tolli}, 
 Representation theory of the symmetric groups, 
 Cambridge University Press, Cambridge (2010). 

\bibitem{Gessel2005}
 \textsc{I.\ M.\ Gessel}, 
 On {M}iki's identity for {B}ernoulli numbers, 
 \emph{J. Number Theory} {\bf 110(1)} (2005), 75--82. 

\bibitem{Hansen1975}
 \textsc{E.\ R.\ Hansen}, 
 A table of series and products, 
 Prentice-Hall, Englewood Cliffs, N.J.\ (1975). 

\bibitem{Hoffman1992}
 \textsc{M.\ E.\ Hoffman}, 
 Multiple harmonic series, 
 \emph{Pacific J. Math.} {\bf 152(2)} (1992), 275--290. 

\bibitem{vanLeeuwen200406}
 \textsc{M.\ A.\ A.\ van Leeuwen}, 
 Schur functions and alternating sums, 
 \emph{Electron.\ J.\ Combin.} {\bf 11(2)} (2004/06), Article 5, 42. 

\bibitem{Lehmer1935}
 \textsc{D.\ H.\ Lehmer}, 
 Lacunary recurrence formulas for the numbers of {B}ernoulli and {E}uler, 
 \emph{Ann. of Math. (2)} {\bf 36(3)} (1935), 637--649. 

\bibitem{Macdonald1995}
 \textsc{I.\ G.\ Macdonald}, 
 Symmetric functions and {H}all polynomials, 
 The Clarendon Press, Oxford University Press, New York (1995). 

\bibitem{Merca2016Asymptotics}
 \textsc{M.\ Merca}, 
 Asymptotics of the {C}hebyshev-{S}tirling numbers of the first kind, 
 \emph{Integral Transforms Spec.\ Funct.} {\bf 27(4)} (2016), 259--267. 

\bibitem{Merca2020}
 \textsc{M.\ Merca}, 
 Bernoulli numbers and symmetric functions, 
 \emph{Rev. R.\ Acad. Cienc. Exactas F\'is. Nat. Ser. A Mat. RACSAM} {\bf 114(1)} (2020), Paper No. 20, 16. 

\bibitem{Merca2018}
 \textsc{M.\ Merca}, 
 An infinite sequence of inequalities involving special values of the {R}iemann zeta function, 
 \emph{Math. Inequal.\ Appl.} {\bf 21(1)} (2018), 17--24. 

\bibitem{Merca2019}
 \textsc{M.\ Merca}, 
 On lacunary recurrences with gaps of length four and eight for the {B}ernoulli numbers, 
 \emph{Bull. Korean Math.\ Soc.} {\bf 56(2)} (2019), 491--499. 

\bibitem{Merca2017}
 \textsc{M.\ Merca}, 
 The {R}iemann zeta function with even arguments as sums over integer partitions, 
 \emph{Amer. Math. Monthly} {\bf 124(6)} (2017), 554--557. 

\bibitem{Miki1978}
 \textsc{H.\ Miki}, 
 A relation between {B}ernoulli numbers, 
 \emph{J. Number Theory} {\bf 10(3)} (1978), 297--302. 

\bibitem{Murnaghan1937}
 \textsc{F.\ D.\ Murnaghan}, 
 The characters of the symmetric group, 
 \emph{Amer. J.\ Math.} {\bf 59(4)} (1937), 739--753. 

\bibitem{Nakayama1941one}
 \textsc{T.\ Nakayama}, 
 On some modular properties of irreducible representations of a symmetric group. {I}, 
 \emph{Jpn. J. Math.} {\bf 17} (1941), 165--184. 

\bibitem{Nakayama1941two}
 \textsc{T.\ Nakayama}, 
 On some modular properties of irreducible representations of a symmetric group. {II}, 
 \emph{Jpn. J. Math.} {\bf 17} (1941), 411--423. 

\bibitem{Ramanujan1911}
 \textsc{S.\ Ramanujan}, 
 Some properties of Bernoulli's numbers, 
 \emph{J.\ Indian Math.\ Soc.} {\bf 3} (1911), 219--234. 

\bibitem{ShirataniYokoyama1982}
 \textsc{K.\ Shiratani and S.\ Yokoyama}, 
 An application of {$p$}-adic convolutions, 
 \emph{Mem.\ Fac.\ Sci.\ Kyushu Univ.\ Ser.\ A} {\bf 36(1)} (1982), 73--83. 

\end{thebibliography}
\end{document}